\documentclass[reqno,11pt]{article}
\sloppy
\usepackage{a4wide,color,eucal,enumerate,mathrsfs}
\usepackage[normalem]{ulem}
\usepackage{amsmath,amssymb,epsfig,bbm}
\numberwithin{equation}{section}

\usepackage[latin1]{inputenc}

\usepackage[pdfborder={0 0 0}]{hyperref}

\frenchspacing

\newenvironment{proof}{\removelastskip\par\medskip   
\noindent{\em proof} \rm}{\penalty-20\null\hfill$\square$\par\medbreak}

\newtheorem{theorem}{Theorem}[section]

\newtheorem{corollary}[theorem]{Corollary}
\newtheorem{lemma}[theorem]{Lemma}
\newtheorem{proposition}[theorem]{Proposition}

\newtheorem{definition}[theorem]{Definition}

\setcounter{tocdepth}{2}

\newcommand{\CD}{{\sf CD}}
\newcommand{\RCD}{{\sf RCD}}
\newcommand{\MCP}{{\sf MCP}}

\newcommand{\ppi}{{\mbox{\boldmath$\pi$}}}

\newcommand{\supp}{\mathop{\rm supp}\nolimits}
\renewcommand{\d}{{\mathrm d}}
\newcommand{\N}{\mathbb{N}}
\newcommand{\R}{\mathbb{R}}

\newcommand{\mm}{\mathfrak m}

\newcommand{\sfd}{{\sf d}}
\newcommand{\prob}[1]{\mathscr P(#1)}
\newcommand{\probt}[1]{\mathscr P_2(#1)}

\newcommand{\geo}{{\rm{Geo}}}
\newcommand{\e}{{\rm{e}}}
\newcommand{\gopt}{{\rm{OptGeo}}}

\newcommand{\lims}{\varlimsup}

\newcommand{\restr}[1]{\lower3pt\hbox{$|_{#1}$}}

\title{Optimal maps and exponentiation on finite dimensional spaces with Ricci curvature bounded from below}
\begin{document}

\author{Nicola Gigli\
   \thanks{University of Nice, \textsf{nicola.gigli@unice.fr}}
   \and
   Tapio Rajala
   \thanks{University of Jyv\"askyl\"a, \textsf{tapio.m.rajala@jyu.fi}}
   \and
   Karl-Theodor Sturm
   \thanks{Universit\"at Bonn, \textsf{sturm@uni-bonn.de}}
   }
\maketitle

\begin{abstract}
We prove existence and uniqueness of optimal maps on $\RCD^*(K,N)$ spaces under the assumption that the starting measure is absolutely continuous. We also discuss  how this result naturally leads to the notion of exponentiation. 
\end{abstract}

\tableofcontents

\section{Introduction}
A basic problem in optimal transportation is the question on whether optimal plans are unique and induced by maps. The crucial result in this direction is the celebrated one of Brenier \cite{Brenier87} granting that for $\mu,\nu\in\probt{\R^d}$ with $\mu$ absolutely continuous w.r.t. the Lebesgue measure and cost=squared-distance, indeed optimal plans are unique and induced by maps. An important generalization has been given by McCann \cite{McCann01} for the same problem on Riemannian manifolds: he shows that the unique optimal map can be written as  $\exp(-\nabla\varphi)$, where $\varphi$ is a Kantorovich potential. As a byproduct of McCann's argument, we also know that for $\mu$-a.e. $x$ the geodesic connecting $x$ to $\exp(-\nabla\varphi(x))$ is unique. We can express both the fact that the unique  optimal plan is induced by a map and the uniqueness of geodesics by looking at the optimal transport problem as a dynamical problem, i.e. by minimizing
\[
\iint_0^1|\dot\gamma_t|^2\,\d t\,\ppi(\gamma), 
\] 
over all measures $\ppi$ on $C([0,1],M)$ such that $(\e_0)_\sharp\ppi=\mu$, $(\e_1)_\sharp\ppi=\nu$, $\e_t:C([0,1],M)\to M$ being the evaluation map given by $\e(\gamma):=\gamma_t$. Then McCann's result can be read as the uniqueness of the minimizer $\ppi$ and the fact that such $\ppi$ is induced by some map $T:M\to C([0,1],M)$ (which in fact takes its values in the space of constant speed minimizing geodesics) in the sense that $\ppi=T_\sharp\mu$. We refer to \cite{Villani09} and \cite{AmbrosioGigli11} for an overview of the subject.

\medskip

In the pioneering works of Lott-Villani \cite{Lott-Villani09} and Sturm \cite{Sturm06I}, \cite{Sturm06II}, an abstract notion of lower Ricci curvature bound on metric measure spaces has been given, and since then a great interest has been given by the community to the understanding of the geometric/analytic properties of these spaces. In \cite{AmbrosioGigliSavare11-2}, a strengthening of the original $\CD(K,\infty)$ condition as defined by Lott-Sturm-Villani has been proposed: this new condition, called Riemannian Ricci curvature bound and denoted by $\RCD(K,\infty)$, enforces in some weak sense a Riemannian-like behavior of the space. According to the analysis done in \cite{Erbar-Kuwada-Sturm13}, \cite{AMS}, \cite{Gigli13} a natural finite-dimensional analogue of the $\RCD(K,\infty)$ notion can be given by requiring that the space is both $\RCD(K,\infty)$ and satisfies the (reduced) curvature-dimension condition $\CD^*(K,N)$ as defined in \cite{BacherSturm10}.

Aim of this short note is to prove the analogue of  Brenier-McCann's theorem  on $\RCD^*(K,N)$ spaces, our result being:
\begin{theorem}[Optimal maps]\label{thm:1}
Let $K\in\R$, $N\in[1,\infty)$ and $(X,\sfd,\mm)$  an $\RCD^*(K,N)$ space. Then for every $\mu,\nu\in\probt X$ with $\mu\ll\mm$ there exists a unique plan $\ppi\in\gopt(\mu,\nu)$. Furthermore, this plan is induced by a map and concentrated on a set of non-branching geodesics, i.e. there is a Borel set $\Gamma\subset C([0,1],X)$ such that $\ppi(\Gamma)=1$ and for every $t\in[0,1)$ the map $\e_t:\Gamma\to X$ is injective.
\end{theorem}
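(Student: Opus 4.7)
Existence of some $\ppi\in\gopt(\mu,\nu)$ is the standard weak-compactness argument in $\prob{C([0,1],X)}$. For the non-branching part of the conclusion I would invoke (and cite) the fact that $\RCD(K,\infty)$, and a fortiori $\RCD^*(K,N)$, implies essential non-branching: after discarding a $\ppi$-null set one may take $\ppi$ concentrated on a Borel set $\Gamma$ of non-branching geodesics, so that $\e_t\restr{\Gamma}$ is injective for every $t\in(0,1)$. The injectivity at $t=0$ is precisely the map property and is the only delicate piece.

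Before attacking the map property I would first prove absolute continuity of the intermediate marginals: using the $\CD^*(K,N)$ displacement-convexity inequality on the entropy functional, together with $\mu_0=\mu\ll\mm$ (of finite entropy, up to restriction to a bounded set), one rules out singular parts and obtains $(\e_t)_\sharp\ppi\ll\mm$ for every $t\in[0,1)$. The map property I would then argue by contradiction: if $\ppi$ is not concentrated on a set where $\e_0$ is injective, disintegration along $\e_0$ yields two Borel-disjoint sub-plans $\ppi^1,\ppi^2$ of positive mass with the same initial marginal $\tilde\mu\ll\mm$ and mutually singular terminal marginals. By the previous step the midpoint marginals $\sigma^i=(\e_{1/2})_\sharp\ppi^i$ are absolutely continuous, and after a further restriction one may assume $\sigma^1=\sigma^2=:\sigma$. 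Gluing the first half of $\ppi^1$ with the second half of $\ppi^2$ along $\sigma$ (and vice versa) produces optimal geodesics that agree at $t=\tfrac{1}{2}$ but end at distinct points, which is genuine branching at the interior time $\tfrac{1}{2}$ and contradicts essential non-branching.

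Uniqueness of $\ppi$ then follows by a standard convex-combination trick: if $\ppi_1,\ppi_2\in\gopt(\mu,\nu)$, the mean $\frac{1}{2}(\ppi_1+\ppi_2)$ is also optimal, hence induced by a map, which forces $\ppi_1=\ppi_2$. The main obstacle is the contradiction argument in the previous paragraph: essential non-branching bites only at interior times, whereas the map property concerns time $t=0$; the bridge is the absolute continuity of intermediate marginals, which allows one to convert what would be an ``initial-time'' failure of injectivity into a genuine ``mid-time'' branching visible to essential non-branching. Making the gluing step rigorous --- specifically, ensuring that after restriction the sub-plans genuinely share a common midpoint marginal and that the concatenated measure is still optimal --- is where the technical work will concentrate.
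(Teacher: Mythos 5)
Your overall strategy --- reduce to the $\RCD(K,\infty)$ optimal-map theorem by making the intermediate marginals absolutely continuous --- is the same as the paper's, but two of your steps have genuine gaps. First, you claim $(\e_t)_\sharp\ppi\ll\mm$ for every $t\in[0,1)$ and for \emph{every} $\ppi\in\gopt(\mu,\nu)$, via ``the $\CD^*(K,N)$ displacement-convexity inequality''. The $\CD^*(K,N)$ condition only asserts the existence of \emph{some} optimal geodesic plan along which the inequality holds; since uniqueness is precisely what is being proved, you cannot transfer the inequality to your given $\ppi$. (Moreover, when $\nu$ is singular the log-entropy inequality is vacuous because ${\rm Ent}_\mm(\nu)=+\infty$; one needs the dimensional functionals precisely so as to drop the endpoint term, as in \eqref{eq:sar}.) The paper therefore only proves the much weaker Lemma \ref{le:ac}: there exists \emph{one} geodesic from $\mu$ to $\nu$ with absolutely continuous interior marginals, obtained by approximating $\nu$ by absolutely continuous measures, applying Theorem \ref{thm:optmap} to each approximation (where uniqueness \emph{is} available, so the $\CD^*$ bound localizes along that plan, Proposition \ref{prop:varphi}), and passing to the limit using properness. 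The whole contradiction argument is then organized around this single good geodesic, not around the arbitrary plan $\ppi$.

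Second, the gluing step fails. Your sub-plans $\ppi^1,\ppi^2$ branch at time $0$: they share the initial marginal but their geodesics separate immediately, so the midpoint marginals $\sigma^1,\sigma^2$ are in general different and may be mutually singular; no restriction makes them equal, and there is nothing to glue along. The correct move is superposition rather than concatenation: arrange (via a selection argument on the $c$-superdifferential, as in the paper) two targets $\nu_1\neq\nu_2$ reached from the same absolutely continuous $\mu$ via the \emph{same} Kantorovich potential, replace them by absolutely continuous interior-time marginals using Lemma \ref{le:ac}, and observe that the average $\frac12(\ppi^1+\ppi^2)$ of the two corresponding plans is still optimal (same potential), has both marginals $\ll\mm$, and is not induced by a map --- contradicting Theorem \ref{thm:optmap} directly. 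No appeal to ``branching at an interior time'' is needed or available. Relatedly, your opening invocation of essential non-branching for the original $\ppi$ is premature, since that property is only known for plans between two absolutely continuous marginals; it too must be recovered through the restriction-to-$[0,t]$ device after the map property is established.
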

Here by $\gopt(\mu,\nu)$ we are denoting the set of minimizers of the dynamical version of the optimal transport as discussed above.
To some extent, the `hard work' needed for the proof of this result has been already carried out in \cite{RajalaSturm12} and \cite{Gigli12a} where it has been proved the following theorem:
\begin{theorem}[Optimal maps on $\RCD(K,\infty)$ spaces]\label{thm:2a}
Let $K\in\R$ and $(X,\sfd,\mm)$  an $\RCD(K,\infty)$ space. Then for every $\mu,\nu\in\probt X$ with $\mu,\nu\ll\mm$ there exists a unique plan $\ppi\in\gopt(\mu,\nu)$ and  this plan is induced by a map and concentrated on a set of non-branching geodesics.
\end{theorem}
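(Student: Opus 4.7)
Existence of some $\ppi\in\gopt(\mu,\nu)$ is routine from compactness and lower semicontinuity of the kinetic energy, so the theorem amounts to three statements: (i)~the intermediate marginals of any such $\ppi$ are absolutely continuous, (ii)~$\ppi$ is concentrated on non-branching geodesics, (iii)~$\ppi$ is unique and induced by a map. I would tackle these in order, with step (ii) as the principal obstacle.

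\textbf{Absolute continuity of intermediate marginals.} Fix $\ppi\in\gopt(\mu,\nu)$ and set $\mu_t:=(\e_t)_\sharp\ppi$. After approximating $\mu,\nu$ by truncations with bounded densities (stability of $\gopt$ under weak convergence handles the limit), one may assume $\mathrm{Ent}_{\mm}(\mu),\mathrm{Ent}_{\mm}(\nu)<\infty$. The curve $t\mapsto\mu_t$ is a $W_2$-geodesic, so the $\CD(K,\infty)$ part of the assumption provides a $W_2$-geodesic along which $\mathrm{Ent}_{\mm}$ satisfies the $K$-convexity inequality
\[
\mathrm{Ent}_{\mm}(\mu_t)\le (1-t)\mathrm{Ent}_{\mm}(\mu)+t\,\mathrm{Ent}_{\mm}(\nu)-\tfrac{K}{2}t(1-t)W_2^2(\mu,\nu);
\]
the Riemannian strengthening in $\RCD(K,\infty)$ is then used to propagate this inequality from the single ``good'' geodesic produced by $\CD(K,\infty)$ to every optimal dynamical plan. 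In particular $\mathrm{Ent}_{\mm}(\mu_t)<\infty$, whence $\mu_t\ll\mm$ for every $t\in[0,1]$.

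\textbf{Non-branching.} This is the crux. Argue by contradiction: suppose there are disjoint Borel sets $A,B\subset C([0,1],X)$ of positive $\ppi$-measure and a time $s\in(0,1)$ such that a positive-measure family of pairs $(\gamma,\eta)\in A\times B$ coincide on $[0,s]$ but separate at time $1$. Via a measurable selection I would construct an alternative plan $\tilde\ppi$ by swapping the second halves of the paired geodesics; a short computation shows that $\tilde\ppi\in\gopt(\mu,\nu)$ as well, and that $t\mapsto(\e_t)_\sharp\tilde\ppi$ is a $W_2$-geodesic which agrees with $t\mapsto\mu_t$ at time $s$ but differs from it on $(s,1]$. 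By Step 1 both geodesics are absolutely continuous, hence both are EVI gradient flows of $\mathrm{Ent}_{\mm}$ issued from the same measure at time $s$, contradicting the uniqueness of EVI gradient flows that is part of the $\RCD$ package (equivalently, the linearity of the heat semigroup). The main technical obstacle is carrying out the gluing measurably while preserving optimality and the absolute-continuity/entropy bounds required to invoke EVI.

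\textbf{Uniqueness and the map.} Uniqueness of $\ppi$ is now immediate: for any two candidates $\ppi_0,\ppi_1\in\gopt(\mu,\nu)$, the convex combination $\tfrac12(\ppi_0+\ppi_1)$ is optimal and, by Step 2, concentrated on non-branching geodesics, which forces $\ppi_0=\ppi_1$. Finally, disintegrate $\ppi$ with respect to $(\e_0)_\sharp\ppi=\mu$: the non-branching property implies that the conditional measure on the fibre over $\mu$-a.e.\ $x$ is a Dirac, so there exists a Borel map $T:X\to C([0,1],X)$ with $\ppi=T_\sharp\mu$, concentrated on the announced set of non-branching geodesics.
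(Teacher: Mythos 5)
The paper itself does not prove this statement: it is imported wholesale from \cite{RajalaSturm12} and \cite{Gigli12a} (the non-branching part from the former, the passage to uniqueness and optimal maps from the latter), so there is no internal proof to compare against line by line. Judged on its own, your outline has the right skeleton and correctly identifies non-branching as the crux, but that step contains two genuine errors. First, the gluing is vacuous: if $\gamma$ and $\eta$ coincide on $[0,s]$, then swapping their restrictions to $[s,1]$ merely exchanges $\gamma$ and $\eta$, so $\tilde\ppi=\ppi$ and no competing curve of measures is produced. Second, and more seriously, a Wasserstein geodesic is not an EVI gradient flow of ${\rm Ent}_\mm$ --- the EVI flow is the heat flow --- so uniqueness of EVI gradient flows says nothing about two geodesics agreeing at time $s$; two distinct $W_2$-geodesics through a common measure exist in abundance even in $\R^d$. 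The actual mechanism of \cite{RajalaSturm12} is different: one first upgrades $\RCD(K,\infty)$ to the statement that ${\rm Ent}_\mm$ is $K$-convex along \emph{every} optimal geodesic plan between finite-entropy measures (this is where the EVI property genuinely enters), and then rules out branching by exploiting the strict entropy drop $-\log 2$ enjoyed by a convex combination $\tfrac12(\eta_1+\eta_2)$ of mutually singular measures as compared with the non-singular case, which is incompatible with $K$-convexity along suitably restricted sub-plans.

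There is a second gap in your final step. Non-branching in the sense you establish (coincidence on an initial segment $[0,s]$ with $s>0$ forces coincidence) does not imply that the disintegration of $\ppi$ over $\e_0$ is a Dirac: two geodesics sharing only their starting point do not branch, so nothing so far prevents positive mass at $x$ from splitting towards two different targets. The injectivity of $\e_0$ on a full-measure set --- which is exactly the ``induced by a map'' assertion, and is what the theorem's formulation of non-branching demands at $t=0$ --- requires the separate argument of \cite{Gigli12a}, which combines the interior-time injectivity with the absolute continuity of the intermediate marginals and an iteration towards $t=0$. Your uniqueness argument via the convex combination $\tfrac12(\ppi_0+\ppi_1)$ is correct once that stronger conclusion is available.
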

More precisely,  in \cite{RajalaSturm12} it has been worked around the delicate issue concerning the non-branching assumption, showing that on $\RCD(K,\infty)$ spaces every optimal geodesic plan between absolutely continuous measures must be concentrated on a set of non-branching geodesics. Then, still in \cite{RajalaSturm12}, it has been observed how such  result coupled with the technique used in \cite{Gigli12a} to prove existence and uniqueness of optimal maps in the non-branching case yield Theorem \ref{thm:2a}.

Here we  start from this results and obtain Theorem \ref{thm:1} using the enhanced compactness granted  by the finite dimensionality together with quite standard ideas in optimal transport theory.

An  interesting fact about Theorem \ref{thm:1} is that it can be equivalently reformulated in the following way:
\begin{theorem}[Exponentiation]\label{thm:2b}
Let $K\in\R$, $N\in[1,\infty)$, $(X,\sfd,\mm)$  an $\RCD^*(K,N)$ space and $\varphi:X\to\R$ a $c$-concave function ($c=\frac{\sfd^2}{2}$). Then  for $\mm$-a.e. $x\in X$ there exists exactly one geodesic $\gamma$ such that $\gamma_0=x$ and $\gamma_1\in\partial^c\varphi(x)$.
\end{theorem}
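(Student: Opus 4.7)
The plan is to deduce the theorem from Theorem \ref{thm:1} by a localization-and-selection argument: cover $\mm$-almost all of the domain of $\varphi$ by bounded Borel pieces on which $\partial^c\varphi$ admits a bounded Borel selection; on each piece apply Theorem \ref{thm:1} to the transport problem induced by the selection; and finally rule out multiplicity of the geodesic via the ``induced by a map'' clause of Theorem \ref{thm:1}. The $c$-superdifferential $\partial^c\varphi$ is a Borel multi-valued map whose values are nonempty for $\mm$-a.e.\ $x$ (using that finite-dimensional $\RCD^*(K,N)$ spaces are proper) and which, by the local Lipschitz regularity of $c$-concave functions, maps bounded sets to bounded sets up to an exceptional null set. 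Together with the Kuratowski--Ryll-Nardzewski selection theorem, this yields a countable cover of $\mm$-a.e.\ point of $X$ by bounded Borel sets $B_n$ equipped with Borel maps $T_n:B_n\to X$ satisfying $T_n(x)\in\partial^c\varphi(x)$ and with $T_n(B_n)$ bounded. It then suffices to argue on a single fixed pair $(B,T)$.

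For existence on $B$, set $\mu := \mm\restr{B}/\mm(B)$ and $\nu := T_\sharp\mu$, both in $\probt{X}$ with $\mu\ll\mm$. The pointwise identity $\varphi(x)+\varphi^c(T(x)) = \sfd^2(x,T(x))/2$ makes $\varphi$ a Kantorovich potential for $(\mu,\nu)$, so any element of $\gopt(\mu,\nu)$ is concentrated on geodesics $\gamma$ with $(\gamma_0,\gamma_1)$ in the graph of $\partial^c\varphi$. Theorem \ref{thm:1} then yields the unique $\ppi\in\gopt(\mu,\nu)$, induced by a Borel map $F:B\to\geo(X)$; hence $F(x)$ is a geodesic from $x$ to $T(x)\in\partial^c\varphi(x)$ for $\mm$-a.e.\ $x\in B$, which is the existence clause.

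For uniqueness, suppose by contradiction that there is a Borel set $E\subset B$ with $\mm(E)>0$ such that for every $x\in E$ there are at least two distinct geodesics $\gamma$ with $\gamma_0=x$ and $\gamma_1\in\partial^c\varphi(x)$. A measurable selection on the corresponding set of ``double points'' (analytic in $X$, hence $\mm$-measurable), combined with a further localization keeping images bounded, produces two Borel maps $S_1,S_2:E\to\geo(X)$ with $(S_i(x))_0=x$, $(S_i(x))_1\in\partial^c\varphi(x)$ for $i=1,2$, and $S_1(x)\neq S_2(x)$ pointwise. Setting $\mu_E := \mm\restr{E}/\mm(E)$ and $\ppi := \tfrac12(S_1)_\sharp\mu_E + \tfrac12(S_2)_\sharp\mu_E$, the plan $\ppi$ is concentrated on geodesics whose endpoints lie in the $c$-cyclically monotone graph of $\partial^c\varphi$, hence is optimal for its marginals $(\mu_E,(\e_1)_\sharp\ppi)$. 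But $\ppi$ is not induced by any map, since the evaluation $\e_0$ fails to be injective on any Borel set of full $\ppi$-measure (it identifies $S_1(x)$ with $S_2(x)$ for every $x\in E$). This contradicts Theorem \ref{thm:1}.

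The main obstacle is the bookkeeping of bounded images required to keep the push-forward measures inside $\probt{X}$: this rests on the local Lipschitz / local boundedness properties of $c$-concave functions on $\RCD^*(K,N)$ spaces, standard but non-trivial in the possibly non-compact metric-measure setting. Once these preparatory facts are secured, the three-step reduction above is a fairly clean application of Theorem \ref{thm:1}; the reverse implication (from Theorem \ref{thm:2b} back to Theorem \ref{thm:1}) would then follow by running the same correspondence in reverse, noting that for any $(\mu,\nu)\in\probt{X}^2$ with $\mu\ll\mm$ a Kantorovich potential $\varphi$ exists, and every element of $\gopt(\mu,\nu)$ must be supported on geodesics produced by the exponentiation.
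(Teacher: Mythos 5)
Your argument is circular: you deduce Theorem \ref{thm:2b} from Theorem \ref{thm:1}, but in this paper Theorem \ref{thm:1} is not an available prior result --- it is the main theorem, and the paper obtains it \emph{from} the exponentiation statement (the ``In particular'' part of the theorem in Section 3 derives the optimal-map statement from the a.e.\ uniqueness of geodesics). The only optimal-map theorem available before Theorem \ref{thm:2b} is proved is Theorem \ref{thm:optmap}, which requires \emph{both} marginals to be absolutely continuous w.r.t.\ $\mm$. This is precisely where your contradiction argument breaks down: the plan $\ppi=\tfrac12(S_1)_\sharp\mu_E+\tfrac12(S_2)_\sharp\mu_E$ you build is indeed optimal and not induced by a map, but its second marginal $(\e_1)_\sharp\ppi$ is a push-forward under selection maps and there is no reason for it to be absolutely continuous (think of the case $\nu=\delta_x$, which is exactly what is needed for Corollary \ref{cor:1}). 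So Theorem \ref{thm:optmap} cannot be invoked, and Theorem \ref{thm:1} is not yet known at this stage.

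The missing idea --- which is the actual content of the paper's proof --- is how to return to absolutely continuous targets. The paper (i) uses a Fubini argument to find an intermediate time $t_0>0$ at which the competing geodesics emanating from a set of positive measure are still uniformly separated, and selects two targets $\nu_1=T_\sharp\mu$, $\nu_2=S_\sharp\mu$ with disjoint supports for which $t_0\varphi$ is a common Kantorovich potential; (ii) invokes Lemma \ref{le:ac} (which rests on the finite-dimensional density estimate \eqref{eq:boundpoint} of Proposition \ref{prop:varphi}) to replace $\nu_1,\nu_2$ by interpolated measures $\mu^1_t,\mu^2_t\ll\mm$, still distinct for $t$ close to $1$; and only then (iii) forms the non-map-induced mixture of the two optimal plans from $\mu$ to $\mu^1_t$ and $\mu^2_t$, whose marginals are now both absolutely continuous, contradicting Theorem \ref{thm:optmap}. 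Your measurable-selection and boundedness bookkeeping is fine as far as it goes, and your existence step is harmless (though it need not pass through any optimal-map theorem: non-emptiness of $\partial^c\varphi(x)$ from Lemma \ref{le:lip} plus the geodesic property of $X$ suffice), but without step (ii) the contradiction cannot be closed.
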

This result can be naturally interpreted as a definition of what is the exponential map evaluated at  `minus the gradient of a $c$-concave function $\varphi$': for every $x\in X$ such that the geodesic $\gamma$  with $\gamma_0=x$ and $\gamma_1\in\partial^c\varphi(x)$ is unique, we put $\exp(-t\nabla\varphi):=\gamma_t$, thus somehow `reversing' the proof of Brenier-McCann theorem. The role of Theorem \ref{thm:2b} is to ensure that this map  is well defined for $\mm$-a.e. $x\in X$.

Notice that to some extent Theorem \ref{thm:2b} is the best one we can expect about exponentiation on a metric measure space. To see why just consider the case of a smooth complete Riemannian manifold $M$ with boundary. Then given $x\in M$ and $v\in T_xM$, the value of $\exp(v)$ is well defined only if there is $y\in M$ such that $\nabla\frac{\sfd^2(\cdot,y)}2=-v$ (neglecting smoothness issues), and functions of the kind $\frac{\sfd^2(\cdot,y)}2$ are the prototype of $c$-concave functions. 

Theorem \ref{thm:1} has some simple but interesting consequences, the first being:
\begin{corollary}\label{cor:1}
Let $K\in\R$, $N\in[1,\infty)$ and $(X,\sfd,\mm)$  an $\RCD^*(K,N)$ space. Then for every $x\in \supp(\mm)$ the following holds: for $\mm$-a.e. $y$ there is only one geodesic connecting $y$ to $x$.
\end{corollary}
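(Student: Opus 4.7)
Fix $x\in\supp(\mm)$. The strategy is to apply Theorem \ref{thm:1} with second marginal $\nu:=\delta_x$ to a family of absolutely continuous first marginals exhausting $X$. Choose a countable covering $X=\bigcup_n B_n$ by bounded Borel sets with $0<\mm(B_n)<\infty$, and set $\mu_n:=\mm(B_n)^{-1}\mm\restr{B_n}$; since $B_n$ is bounded, $\mu_n\in\probt X$, and clearly $\mu_n\ll\mm$. Theorem \ref{thm:1} then produces, for each $n$, a unique $\ppi_n\in\gopt(\mu_n,\delta_x)$, induced by a Borel map $T_n$ which, for $\mu_n$-a.e.\ $y$, selects a geodesic from $y$ to $x$. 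Showing that $T_n(y)$ is the \emph{only} such geodesic for $\mu_n$-a.e.\ $y$ and taking a countable union over $n$ then gives the corollary.

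Suppose for contradiction that for some $n$ the set $A_n:=\{y\in B_n:\text{there is a geodesic from }y\text{ to }x\text{ distinct from }T_n(y)\}$ has positive $\mm$-measure. The set $\mathcal G:=\{(y,\gamma)\in X\times C([0,1],X):\gamma\in\geo(X),\,\gamma_0=y,\,\gamma_1=x\}$ is closed in the Polish product, so $\mathcal H:=\mathcal G\setminus\{(y,T_n(y)):y\in X\}$ is Borel with non-empty fiber over every $y\in A_n$. A standard measurable selection argument then yields a Borel map $S_n:B_n\to C([0,1],X)$ satisfying $(S_n(y))_0=y$, $(S_n(y))_1=x$, $S_n(y)\in\geo(X)$, and $S_n\ne T_n$ on a Borel subset $A_n'\subset A_n$ with $\mm(A_n')>0$ (take $S_n:=T_n$ outside $A_n'$).

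Pushing forward, $\ppi_n':=(S_n)_\sharp\mu_n$ has marginals $\mu_n$ and $\delta_x$ and assigns full mass to minimizing geodesics from points of $B_n$ to $x$, hence $\ppi_n'\in\gopt(\mu_n,\delta_x)$. But the disintegrations of $\ppi_n$ and $\ppi_n'$ along $\e_0$ are $\delta_{T_n(y)}$ and $\delta_{S_n(y)}$ respectively, and these differ on the positive $\mu_n$-measure set $A_n'$; thus $\ppi_n\ne\ppi_n'$, contradicting the uniqueness part of Theorem \ref{thm:1}. The main technical point I expect to justify carefully is the measurable selection producing $S_n$ as a \emph{Borel} map that genuinely differs from $T_n$ on a positive-measure subset of $A_n$; everything else is routine manipulation of optimal plans.
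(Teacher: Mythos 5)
Your proof is correct and follows exactly the route the paper indicates: the paper's entire argument for Corollary \ref{cor:1} is the one-line remark that it follows by choosing $\nu:=\delta_x$ in Theorem \ref{thm:1}, and your write-up simply supplies the details (exhaustion by bounded sets, the automatic optimality of any geodesic lift of the product coupling $\mu_n\otimes\delta_x$, and the selection/disintegration argument showing a second geodesic would violate uniqueness). The only point to polish is the measurability of $A_n$, which is a priori only an analytic set, so one should phrase the contradiction hypothesis in terms of positive outer measure or invoke universal measurability before applying the selection theorem.
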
 
This can be easily seen choosing $\nu:=\delta_x$ in Theorem \ref{thm:1}.
In \cite{R2012a} the conclusion of Corollary \ref{cor:1} was proven under the assumption that the $\CD(K,N)$ condition holds along every geodesic.
However, $\RCD^*(K,N)$ a priori only gives the $\CD^*(K,N)$ condition along every geodesic between any two measures with bounded densities, see \cite{Erbar-Kuwada-Sturm13}.
Thus Corollary \ref{cor:1} is not a direct consequence of \cite[Theorem 4]{R2012a}.
A further consequence of this corollary is the following:
\begin{corollary}\label{cor:2}
Let $K\in\R$, $N\in[1,\infty)$ and $(X,\sfd,\mm)$  an $\RCD^*(K,N)$ space. Then the space satisfies the $\MCP(K,N)$ condition.
\end{corollary}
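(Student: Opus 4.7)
The plan is to deduce $\MCP(K,N)$ from the $\CD^*(K,N)$ part of $\RCD^*(K,N)$ by an approximation argument, using the uniqueness granted by Theorem~\ref{thm:1} to identify the limiting plan. Fix $x_0 \in \supp(\mm)$ and a bounded Borel set $A \subset X$ with $\mm(A) \in (0,\infty)$ (assumed inside $B(x_0,\pi\sqrt{(N-1)/K})$ when $K>0$). Set $\mu_0 := \mm(A)^{-1}\mm|_A$ and, for small $\eps > 0$, $\mu_1^\eps := \mm(B_\eps(x_0))^{-1}\mm|_{B_\eps(x_0)}$. By Theorem~\ref{thm:2a}, there is a unique $\ppi^\eps \in \gopt(\mu_0,\mu_1^\eps)$ induced by a map and concentrated on non-branching geodesics.

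I would then apply the $\CD^*(K,N)$ condition along the $W_2$-geodesic $t \mapsto \mu_t^\eps := (\e_t)_\sharp \ppi^\eps$, whose endpoints are absolutely continuous with bounded densities. Combining the resulting entropy inequality with the non-branching structure, by restricting $\ppi^\eps$ to arbitrary Borel subsets of geodesics (in the spirit of \cite{RajalaSturm12}), one obtains the pointwise density bound
\[
\rho_t^\eps(\gamma_t) \leq \mm(A)^{-1}\, \tau^{(1-t)}_{K,N}(\sfd(\gamma_0,\gamma_1))^{-N}
\]
for $\ppi^\eps$-a.e.\ $\gamma$, where $\rho_t^\eps$ is the density of $\mu_t^\eps$ w.r.t.\ $\mm$; the term involving the density of $\mu_1^\eps$ is discarded since it only improves the bound. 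Testing against a nonnegative bounded continuous $f:X\to\R$, this rewrites as
\[
\int f \,\d\mm \ \geq\ \mm(A) \int f(\gamma_t)\, \tau^{(1-t)}_{K,N}(\sfd(\gamma_0,\gamma_1))^N \,\d\ppi^\eps(\gamma).
\]

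Finally I would let $\eps \to 0$. By boundedness of $W_2$-supports and compactness of optimal geodesic plans, $\ppi^\eps$ converges weakly, along subsequences, to some $\ppi \in \gopt(\mu_0,\delta_{x_0})$, which Theorem~\ref{thm:1} singles out as unique. Since $\gamma_1 \to x_0$ on the support and $\tau^{(1-t)}_{K,N}$ is continuous in its argument, the displayed inequality passes to the limit and yields the $\MCP(K,N)$ inequality with the plan $\ppi$. The main obstacle is the middle step: upgrading the entropy form of $\CD^*(K,N)$ to the required pointwise density bound. This upgrade uses crucially that $\ppi^\eps$ is concentrated on non-branching geodesics and is induced by a map, so that the entropy inequality can be localized along arbitrary subplans; once this is in place, the passage to the limit is routine thanks to Theorem~\ref{thm:1}.
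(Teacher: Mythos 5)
There is a genuine gap, and it sits exactly at the step you yourself single out, though for a different reason than the one you give. Localizing the $\CD^*(K,N)$ inequality along the non-branching plan $\ppi^\eps$ is indeed legitimate (it is the content of Proposition \ref{prop:varphi}), but it yields the pointwise bound
\[
\rho^\eps_t(\gamma_t)\ \le\ \rho^\eps_0(\gamma_0)\,\Big(\sigma^{(1-t)}_{K,N}\big(\sfd(\gamma_0,\gamma_1)\big)\Big)^{-N},
\]
with the \emph{reduced} distortion coefficients $\sigma^{(1-t)}_{K,N}$, whereas the $\MCP(K,N)$ condition of \cite{Sturm06II} asserted by the corollary requires the \emph{full} coefficients $\tau^{(1-t)}_{K,N}(\theta):=(1-t)^{1/N}\big(\sigma^{(1-t)}_{K,N-1}(\theta)\big)^{1-1/N}$. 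Since $\sigma^{(s)}_{K,N}(\theta)\le\tau^{(s)}_{K,N}(\theta)$ for every $K$, $N$, $s$, $\theta$ (this inequality is precisely why $\CD(K,N)$ implies $\CD^*(K,N)$), the bound you can actually extract from localization is \emph{weaker} than the one you wrote, and for $K\neq 0$ the discrepancy does not vanish as $\eps\to 0$. Passing from the $\sigma$-bound to the $\tau$-bound as one endpoint collapses to a Dirac mass is a genuine self-improvement property of $\CD^*(K,N)$: it is obtained by iterating the reduced inequality along finer and finer subdivisions of the geodesics, and it is the main theorem of \cite{Cavalletti-Sturm12}. It is not supplied by non-branching plus localization alone, and it is not a routine limit.

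Your route also diverges from the paper's, which does not reprove this bootstrap: the paper observes that \cite{Cavalletti-Sturm12} already shows that the (local) curvature-dimension condition implies $\MCP(K,N)$ under a non-branching assumption, that this assumption enters only through the $\mm$-a.e.\ uniqueness of geodesics ending at a fixed point, and that Corollary \ref{cor:1} provides exactly this uniqueness on $\RCD^*(K,N)$ spaces without any a priori non-branching. The approximation part of your argument ($\mu_1^\eps\to\delta_{x_0}$, tightness and weak convergence of $\ppi^\eps$, identification of the limit via Theorem \ref{thm:1}) is sound; to complete the proof you must either carry out the Cavalletti--Sturm iteration at the point where you assert the $\tau$-bound, or simply cite their result together with Corollary \ref{cor:1} as the paper does.
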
 
From \cite{R2012b} we know that every $\CD(K,N)$ space satisfies the $\MCP(K,N)$ condition in the sense of \cite{O2007},
meaning that between any absolutely continuous measure and a dirac mass there exists a geodesic that satisfies the $\MCP(K,N)$ condition.
In Corollary \ref{cor:2} we obtain a more strict version of the $\MCP(K,N)$ condition, considered in \cite{Sturm06II}, with
a global selection of distributions of geodesics between points such that using these geodesics
the $\MCP(K,N)$ condition always holds. Since by Corollary \ref{cor:1} the geodesics are essentially unique, in fact
any choice of geodesics in an $\RCD^*(K,N)$ space will work for the $\MCP(K,N)$ condition.

The difficult part in proving Corollary \ref{cor:2} relies in proving a sort of self-improving property for the $\CD^*(K,N)$ condition: this has been the scope of \cite{Cavalletti-Sturm12}, where such result has been proved under the non-branching assumption. Yet, such additional hypothesis was made only to get the result of Corollary \ref{cor:1} above. Given that in the $\RCD^*(K,N)$ it holds without the a priori non-branching assumption, Corollary \ref{cor:2} follows.

A final remark which is worth to make, in particular in connection with Sobolev calculus as developed in \cite{AmbrosioGigliSavare11}, is the following:
\begin{corollary}
Let $K\in\R$, $N\in[1,\infty)$, $(X,\sfd,\mm)$  an $\RCD^*(K,N)$ space,  $\mu,\nu\in\probt X$ with $\mu\ll\mm$ and $\ppi\in\gopt(\mu,\nu)$ be the  unique optimal geodesic plan given by Theorem \ref{thm:1}. Then $(\e_t)_\sharp\ppi\ll\mm$ for every $t\in[0,1)$. 

Furthermore, if $\mu,\nu$ have bounded support (resp. $K=0$) and $\mu$ and has density bounded above by some constant $C$, then $(\e_t)_\sharp\ppi\leq C(t)\mm$ for any $t\in[0,1)$ and some constant $C(t)$ depending only on $C,t,K,N$ and the supports of $\mu,\nu$ (resp. on $C,t,N$). If $K<0$ and either $\mu$ or $\nu$ have unbounded support, then the optimal geodesic plan $\ppi\in\gopt(\mu,\nu)$ can be written as $\ppi=\sum_{n\in\N}\ppi_n$ with $\ppi_n$ non negative Borel measures on $\geo(X)$ such that  $(\e_t)_\sharp\ppi\leq C_n(t)\mm$ for any $t\in[0,1)$ and some constants $C_n(t)$ depending only on $C,t,K,N,n$.
\end{corollary}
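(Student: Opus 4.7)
The plan is to extract from the reduced curvature-dimension condition $\CD^*(K,N)$ a pointwise upper bound on the density of $(\e_t)_\sharp\ppi$ and then propagate this bound to the setting where $\nu$ is only a probability measure in $\probt X$ by approximation, exploiting the uniqueness provided by Theorem \ref{thm:1}.

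Consider first the case where $\mu,\nu$ have bounded supports and $\mu$ has density bounded by $C$. Approximate $\nu$ by a sequence $\nu_n\ll\mm$ with $\nu_n\to\nu$ in $W_2$, the $\nu_n$ having uniformly bounded supports contained in some bounded set $\tilde B$. Let $\ppi_n$ be the unique element of $\gopt(\mu,\nu_n)$ supplied by Theorem \ref{thm:1}. Both endpoints being absolutely continuous, $\CD^*(K,N)$ yields along the unique optimal geodesic a pointwise density inequality of the form
\[
[\rho_t^n(\gamma_t)]^{-1/N}\;\geq\;\sigma_{K,N}^{(1-t)}(|\dot\gamma|)\,\rho_\mu^{-1/N}(\gamma_0)
\]
for $\ppi_n$-a.e.\ $\gamma$, where $\rho_t^n$ is the density of $(\e_t)_\sharp\ppi_n$. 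Using $\rho_\mu\leq C$ and $|\dot\gamma|\leq D:=\diam(\supp\mu\cup\tilde B)$ gives $(\e_t)_\sharp\ppi_n\leq C(t,K,N,D,C)\,\mm$ for all $t\in[0,1)$. The family $\{\ppi_n\}$ is tight, and Theorem \ref{thm:1} (applied to $(\mu,\nu)$) identifies every weak limit with $\ppi$, so $\ppi_n\rightharpoonup\ppi$ and the uniform density bound passes to the limit, giving $(\e_t)_\sharp\ppi\leq C(t,K,N,D,C)\,\mm$. When $K=0$ one has $\sigma_{0,N}^{(s)}\equiv s$, so the bound is independent of $D$ and the bounded-support assumption can be dropped.

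For general $\mu\ll\mm$, decompose $X$ into countably many disjoint Borel sets $A_k$ on each of which $\rho_\mu$ is bounded (by intersecting bounded Borel sets with level sets of $\rho_\mu$) and set $\ppi_k:=\ppi|_{\e_0^{-1}(A_k)}$. The restriction property of optimal plans ensures each $\ppi_k$ is optimal between its marginals, so the previous step yields $(\e_t)_\sharp\ppi_k\ll\mm$ for $t<1$; summing over $k$ gives $(\e_t)_\sharp\ppi\ll\mm$. Finally, for $K<0$ with possibly unbounded supports, cover $X\times X$ by countably many disjoint bounded Borel rectangles $B_n\times\tilde B_n$ and set $\ppi_n:=\ppi|_{(\e_0,\e_1)^{-1}(B_n\times\tilde B_n)}$: again each $\ppi_n$ is optimal between its marginals, and the bounded-support step applied piecewise produces the stated decomposition with $C_n(t)$ depending on $\diam(B_n\cup\tilde B_n)$ together with $C,t,K,N$.

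The main obstacle is confirming that the geodesic selected by the $\CD^*(K,N)$ definition coincides with our optimal plan $\ppi_n$ and that the entropy-type estimate of $\CD^*(K,N)$ is equivalent to the pointwise density bound used above; the first point is immediate from the uniqueness statement of Theorem \ref{thm:1}, while the second is standard in the `distorted displacement convexity' formulation of the reduced curvature-dimension condition.
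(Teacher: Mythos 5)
Your argument is essentially the paper's intended one. The paper's one-line justification (``localizing the $\CD^*(K,N)$ condition along the optimal geodesic plan'') is exactly what is carried out in Proposition \ref{prop:varphi}, which upgrades the integral inequality \eqref{eq:CD-def} to the pointwise density bound \eqref{eq:boundpoint} by means of the uniqueness, restriction-stability and non-branching coming from Theorem \ref{thm:optmap}, and in Lemma \ref{le:ac}, which is precisely your approximation of $\nu$ by absolutely continuous measures with a uniform $L^\infty$ bound that passes to the weak limit. In particular the two ``obstacles'' you list at the end are not open points: the identification of the plan selected by $\CD^*(K,N)$ with $\ppi_n$ and the passage from the entropy-type inequality to the pointwise bound are the content of Proposition \ref{prop:varphi}. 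Your identification of the weak limit of $(\ppi_n)$ with $\ppi$ via uniqueness, and the remark that $\sigma^{(1-t)}_{0,N}\equiv 1-t$ makes the constant diameter-free when $K=0$, are both correct.

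One loose end: in the step for general $\mu\ll\mm$ you restrict $\ppi$ only over $\e_0^{-1}(A_k)$, so the second marginal of $\ppi_k$ may still have unbounded support, and for $K<0$ the ``previous step'' (which needs a diameter bound to control $\sigma^{(1-t)}_{K,N}$) does not apply to $\ppi_k$ as written. The fix is the rectangle decomposition you introduce in your very next sentence: restrict over $(\e_0,\e_1)^{-1}(A_k\times\tilde B_j)$, observe that the first marginal of each piece is dominated by $\mu\restr{A_k}$ and hence has bounded density after normalization, apply the bounded-support case to each piece, and sum over $k,j$; absolute continuity (unlike the quantitative $L^\infty$ bound) survives the countable summation. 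With that rearrangement the proof is complete.
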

The simple proof follows by localizing the $\CD^*(K,N)$ condition along the optimal geodesic plan.

\smallskip
\noindent {\bf Acknowledgement.}
This paper was partly written during the program ``Interactions Between Analysis and Geometry'' at the Institute for Pure and Applied Mathematics (IPAM)
at University of California, Los Angeles. The authors thank the institute for the excellent research environment.
T.R. also acknowledges the support of the Academy of Finland project no. 137528.

\section{Preliminaries}

We assume the reader to be familiar with optimal transport and the definition of spaces with Ricci curvature bounded from below in the sense of Lott-Sturm-Villani. Here we just recall some basic notation.

Given a geodesic, complete and separable metric space $(X,\sfd)$, the set $\probt X$ is the set of Borel probability measures on it with finite second moment. By $\geo(X)$ we denote the space of constant speed minimizing geodesics on $X$ endowed with the $\sup$-distance.

Given such metric space $(X,\sfd)$ and $\mu,\nu\in\probt X$, a Borel probability measure $\ppi$ on $\geo(X)$ is called optimal geodesic plan from $\mu$ to $\nu$ provided $(\e_0)_\sharp\ppi=\mu$, $(\e_1)_\sharp\ppi=\nu$  and it achieves the minimum of
\[
\int \sfd^2(\gamma_0,\gamma_1)\,\d\ppi(\gamma),
\]
among all Borel probability measure $\ppi'$ on $\geo(X)$ such that $(\e_0)_\sharp\ppi'=\mu$, $(\e_1)_\sharp\ppi'=\nu$.  The set of all optimal geodesic plans is denoted by $\gopt(\mu,\nu)$. Notice that $\gopt(\mu,\nu)$ is never empty under the above assumption.

A function $\varphi:X\to\R\cup\{-\infty\}$ not identically $-\infty$ is called $c$-concave provided there is $\psi:X\to\R\cup\{-\infty\}$ such that
\[
\varphi(x)=\inf_{y\in X}\frac{\sfd^2(x,y)}{2}-\psi(y).
\]
Given a $c$-concave function $\varphi$, its $c$-transform $\varphi^c:X\to\R\cup\{+\infty\}$ is defined by
\[
\varphi^c(y):=\inf_{x\in X}\frac{\sfd^2(x,y)}{2}-\varphi(x).
\] 
It turns out that for every $c$-concave function $\varphi$ it holds $\varphi^{cc}=\varphi$. The $c$-superdifferential $\partial^c\varphi$ of a $c$-concave function $\varphi$ is the subset of $X^2$ of those couples $(x,y)$ such that
\[
\varphi(x)+\varphi^c(y)=\frac{\sfd^2(x,y)}2,
\]
and for $x\in X$, the set $\partial^c\varphi(x)\subset X$ is the set of those $y$'s such that $(x,y)\in\partial^c\varphi(y)$.

It can be proved that a Borel probability measure $\ppi$ on $\geo(X)$ belongs to $\gopt((\e_0)_\sharp\ppi,(\e_1)_\sharp\ppi)$ if and only if there is a $c$-concave function $\varphi$ such that $\supp(\e_0,\e_1)_\sharp\ppi\subset\partial^c\varphi$. Any such $\varphi$ is called Kantorovich potential from $(\e_0)_\sharp\ppi$ to $(\e_1)_\sharp\ppi$. It is then easy to check that for any Kantorovich potential $\varphi$ from $\mu$ to $\nu$, every  $\ppi\in\gopt(\mu,\nu)$ and every $t\in[0,1]$,  the function $t\varphi$ is a Kantorovich potential from $\mu$ to $(\e_t)_\sharp\ppi$.

Notice that Kantorovich potentials can be chosen to satisfy the following property, slightly stronger than $c$-concavity:
\[
\varphi(x)=\inf_{y\in\supp(\nu)}\frac{\sfd^2(x,y)}{2}-\varphi^c(y),
\]
which shows in particular that if $\supp(\nu)$ is bounded, then $\varphi$ can be chosen to be locally Lipschitz.

We turn to the formulation of the $\CD^*(K,N)$ condition, coming from  \cite{BacherSturm10}, to which we also refer for a detailed discussion of its relation with the $\CD(K,N)$ condition
 (see also \cite{Cavalletti-Sturm12} and \cite{Cavalletti12}).

Given $K \in \R$ and $N \in [1, \infty)$, we define the distortion coefficient $[0,1]\times\R^+\ni (t,\theta)\mapsto \sigma^{(t)}_{K,N}(\theta)$ as
\[
\sigma^{(t)}_{K,N}(\theta):=\left\{
\begin{array}{ll}
+\infty,&\qquad\textrm{ if }K\theta^2\geq N\pi^2,\\
\frac{\sin(t\theta\sqrt{K/N})}{\sin(\theta\sqrt{K/N})}&\qquad\textrm{ if }0<K\theta^2 <N\pi^2,\\
t&\qquad\textrm{ if }K\theta^2=0,\\
\frac{\sinh(t\theta\sqrt{K/N})}{\sinh(\theta\sqrt{K/N})}&\qquad\textrm{ if }K\theta^2 <0.
\end{array}
\right.
\]
\begin{definition}[Curvature dimension bounds]
Let $K \in \R$ and $ N\in[1,  \infty)$. We say that a m.m.s.  $(X,\sfd,\mm)$
 is a $\CD^*(K,N)$-space if for any two measures $\mu_0, \mu_1 \in \prob X$ with support  bounded and contained in $\supp(\mm)$ there
exists a measure $\ppi \in \gopt(\mu_0,\mu_1)$ such that for every $t \in [0,1]$
and $N' \geq  N$ we have
\begin{equation}\label{eq:CD-def}
-\int\rho_t^{1-\frac1{N'}}\,\d\mm\leq - \int \sigma^{(1-t)}_{K,N'}(\sfd(\gamma_0,\gamma_1))\rho_0^{-\frac1{N'}}+\sigma^{(t)}_{K,N'}(\sfd(\gamma_0,\gamma_1))\rho_1^{-\frac1{N'}}\,\d\ppi(\gamma)
\end{equation}
where for any $t\in[0,1]$ we  have written $(\e_t)_\sharp\ppi=\rho_t\mm+\mu_t^s$  with $\mu_t^s \perp \mm$.
\end{definition}
Notice that if $(X,\sfd,\mm)$ is a $\CD^*(K,N)$-space, then so is $(\supp(\mm),\sfd,\mm)$, hence it is not restrictive to assume that $\supp(\mm)=X$, a hypothesis that we shall always implicitly do from now on. Also, for any $\CD^*(K,N)$ space $(X,\sfd,\mm)$ we have that $(X,\sfd)$ is geodesic and proper.

In \cite{AmbrosioGigliSavare11-2} (see also \cite{AmbrosioGigliMondinoRajala12}) an enforcement of the curvature condition $\CD(K,\infty)$ as defined by Lott-Villani and Sturm in \cite{Lott-Villani09} and \cite{Sturm06I} has been proposed. This condition, called Riemannian Ricci curvature bound and denoted by $\RCD(K,\infty)$, enforces in some weak sense a Riemannian-like structure of the space. For our purposes, it is not necessary to recall the quite technical definition, but only the following crucial result, proved in  \cite{RajalaSturm12} (see also \cite{Gigli12a}):
\begin{theorem}[Optimal maps in $\RCD(K,\infty)$ spaces]\label{thm:optmap}
Let $(X,\sfd,\mm)$ be an  $\RCD(K,\infty)$ space and $\mu,\nu\in\probt X$ two measures absolutely continuous w.r.t. $\mm$. 

Then there exists a unique $\ppi\in\gopt(\mu,\nu)$ and this plan is induced by a map and concentrated on a set of non-branching geodesics, i.e. for any $t\in[0,1]$ there exists a Borel map $T_t:X\to\geo(X)$ such that $\ppi=(T_t)_\sharp(\e_t)_\sharp\ppi$. 
\end{theorem}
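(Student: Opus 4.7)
The proof naturally splits into two layers. The outer layer, following \cite{Gigli12a}, assumes essential non-branching of $\supp\ppi$ and deduces uniqueness and the map structure by a Brenier--McCann type argument. The inner layer, which is the main technical content of \cite{RajalaSturm12}, is to establish that non-branching property starting from the bare $\RCD(K,\infty)$ assumption. I expect the inner layer to be the decisive obstacle.

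As a preliminary I would invoke the displacement $K$-convexity of the relative entropy $\mathrm{Ent}_\mm$, which is part of the $\RCD(K,\infty)$ definition, to show that for any $\ppi\in\gopt(\mu,\nu)$ with $\mu,\nu\ll\mm$ one has $(\e_t)_\sharp\ppi\ll\mm$ for every $t\in[0,1]$; a standard truncation handles the case of infinite entropy at the endpoints. This interior regularity is what allows the $\RCD$ hypothesis to interact nontrivially with intermediate marginals, and all the rigidity later exploited rests on it.

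For the non-branching step I would argue by contradiction. Assume a Borel set $A\subset\geo(X)$ with $\ppi(A)>0$ consists of geodesics branching at some $t_0\in(0,1)$: each $\gamma\in A$ admits a distinct partner $\tilde\gamma\in\supp\ppi$ with $\gamma\restr{[0,t_0]}=\tilde\gamma\restr{[0,t_0]}$ but $\gamma_1\neq\tilde\gamma_1$. A measurable selection produces such a pairing, and a competitor $\tilde\ppi\in\gopt(\mu,\nu)$ is obtained by swapping the terminal halves of the paired geodesics on $A$; the optimality is preserved by the triangle inequality together with the gluing property of geodesics. The construction is then designed so that the midpoint marginal $(\e_{t_0})_\sharp\tilde\ppi$ carries strictly too much mass near the branching points to be consistent with the $K$-convexity inequality linking $\mathrm{Ent}_\mm((\e_{t_0})_\sharp\tilde\ppi)$ to its absolutely continuous endpoints. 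This quantitative entropy estimate is the technical heart of the argument, and is where the Riemannian character of $\RCD(K,\infty)$ is genuinely used beyond plain $\CD(K,\infty)$.

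With non-branching in hand, uniqueness and the map structure follow by now-standard arguments. For uniqueness, given $\ppi_1,\ppi_2\in\gopt(\mu,\nu)$, form $\ppi:=\tfrac12(\ppi_1+\ppi_2)$; it is again optimal, hence concentrated on a non-branching set $G$, and a disintegration argument at an intermediate time together with the preliminary absolute-continuity step forces $\ppi_1=\ppi_2$. For the map at a fixed $t\in[0,1]$, disintegrate $\ppi$ along $(\e_t)_\sharp\ppi$: two geodesics in $G$ that coincide at time $t$ would, by non-branching applied both to the forward half and to the time-reversed backward half, have to agree on all of $[0,1]$, so each fibre is a singleton $(\e_t)_\sharp\ppi$-a.e., yielding the desired Borel map $T_t$ with $\ppi=(T_t)_\sharp(\e_t)_\sharp\ppi$.
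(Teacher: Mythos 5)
First, a point of order: the paper does not prove Theorem \ref{thm:optmap} at all --- it is quoted as an external input, proved in \cite{RajalaSturm12} (building on \cite{Gigli12a}) --- so what you are reconstructing is the proof of those references. Your architecture is the right one (absolute continuity of all intermediate marginals of \emph{every} optimal plan via the entropy $K$-convexity available in $\RCD(K,\infty)$ spaces plus a truncation, then non-branching, then maps and uniqueness), and the preliminary step is essentially correct, provided you make explicit that what is used is the \emph{strong} form of the $\CD(K,\infty)$ condition ($K$-convexity of ${\rm Ent}_\mm$ along every Wasserstein geodesic, a consequence of the EVI characterization of $\RCD$), since plain $\CD(K,\infty)$ only produces one good geodesic and says nothing about an arbitrary $\ppi\in\gopt(\mu,\nu)$. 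The decisive step, however, is broken as written. If $\gamma$ and $\tilde\gamma$ agree on $[0,t_0]$, then ``swapping their terminal halves'' returns exactly the pair $\{\gamma,\tilde\gamma\}$: the initial halves are identical, so the swap is the identity permutation and produces no competitor $\tilde\ppi\neq\ppi$. Likewise there is no ``excess mass near the branching points'' in $(\e_{t_0})_\sharp\tilde\ppi$, because by hypothesis the branches \emph{coincide} at $t_0$ and any such rearrangement leaves every time-marginal unchanged; the degeneracy caused by branching is only visible at times $t>t_0$, and extracting a contradiction from it is precisely the content of \cite{RajalaSturm12}. There the mechanism is different: one first upgrades the qualitative absolute continuity to quantitative $L^\infty$ bounds on the intermediate densities of \emph{every} optimal geodesic plan between measures with bounded densities, bounds that survive restriction of the plan to subsets of positive measure, and non-branching is then derived from these uniform bounds applied to suitably restricted sub-plans. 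You have labelled exactly this step ``the technical heart'' and then omitted it, so the contradiction is never actually produced.

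There is a second gap in the final step: non-branching alone does not yield that $\ppi$ is induced by a map, i.e.\ the existence of $T_0$. Non-branching forbids two geodesics of $G$ from agreeing on some $[0,s]$, $s>0$, and separating afterwards, but it does not forbid two distinct geodesics from emanating from the same initial point; your ``time-reversed backward half'' argument is empty at $t=0$ (and symmetrically at $t=1$). The injectivity of $\e_t$ on $G$ for $t\in(0,1)$ does follow from non-branching as you say, but the case $t=0$ --- which is the assertion ``induced by a map'' --- requires the separate bisection argument of \cite{Gigli12a}, using that $t\varphi$ is a Kantorovich potential from $\mu$ to $(\e_t)_\sharp\ppi$ together with the absolute continuity of the intermediate marginals; the same ingredient is what makes your one-line uniqueness argument for $\tfrac12(\ppi_1+\ppi_2)$ actually close.
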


Finally we recall the definition of $\RCD^*(K,N)$ spaces as given in \cite{Erbar-Kuwada-Sturm13} (see also \cite{AMS}):
\begin{definition}[$\RCD^*(K,N)$ spaces]
Let $K\in \R$ and $N\in[1,\infty)$. We say that $(X,\sfd,\mm)$ is an $\RCD^*(K,N)$ space provided it is both $\CD^*(K,N)$ and $\RCD(K,\infty)$.
\end{definition}

\section{Exponentiation and optimal maps}
We start with the following simple result which shows how the use of Theorem \ref{thm:optmap} allows for the localization of the $\CD^*(K,N)$ condition along a geodesic connecting two absolutely continuous measures.
\begin{proposition}\label{prop:varphi}
Let $(X,\sfd,\mm)$ be an $\RCD^*(K,N)$ space and  $\mu_i=\rho_i\mm\in\probt X$, $i=0,1$,  two given measures. Let $\ppi\in\gopt(\mu_0,\mu_1)$ be the unique optimal geodesic plan from $\mu_0$ to $\mu_1$ given by Theorem \ref{thm:optmap} and put $\mu_t:=(\e_t)_\sharp\ppi$. Then $\mu_t\ll\mm$ for every $t\in[0,1]$ and writing $\mu_t=\rho_t\mm$ for every $0\leq t\leq r\leq s\le 1$ we have
\begin{equation}
\label{eq:boundpoint}
\rho_r(\gamma_r)^{-\frac1N}\geq \rho_t(\gamma_t)^{-\frac1N}\sigma^{(\frac{s-r}{s-t})}_{K,N}(\sfd(\gamma_t,\gamma_s))+\rho_s(\gamma_s)^{-\frac1N}\sigma^{(\frac{r-t}{s-t})}_{K,N}(\sfd(\gamma_t,\gamma_s)),\qquad\ppi-a.e.\ \gamma.
\end{equation}
\end{proposition}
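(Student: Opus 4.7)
The plan is to first establish the pointwise inequality at the endpoint times $t=0$, $s=1$, and then reduce the general case to it by time-reparametrization. Throughout, I rely on the non-branching structure from Theorem~\ref{thm:optmap}: since $\ppi=(T_s)_\sharp\mu_s$ for every $s\in[0,1]$, the evaluation $\e_s$ is essentially injective on $\supp(\ppi)$. The absolute continuity of each $\mu_t$ is an $\RCD(K,\infty)$ fact proved in \cite{RajalaSturm12}, which applies here since $\RCD^*(K,N)\Rightarrow\RCD(K,\infty)$ and $\mu_0,\mu_1\ll\mm$.

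For the endpoint case I fix a Borel set $A\subset\geo(X)$ with $\ppi(A)>0$ whose endpoint marginals have bounded support, and set $\ppi_A:=\ppi(A)^{-1}\ppi|_A$. By the defining property of $\CD^*(K,N)$ applied to the absolutely continuous, compactly supported marginals of $\ppi_A$, \emph{some} optimal plan between them satisfies \eqref{eq:CD-def}; but Theorem~\ref{thm:optmap} forces this plan to equal $\ppi_A$. The non-branching structure then gives $\rho_s^A=\ppi(A)^{-1}\mathbf{1}_{T_s^{-1}(A)}\rho_s$ for the density of $(\e_s)_\sharp\ppi_A$. Substituting into \eqref{eq:CD-def} and using the change of variables $\int_{T_t^{-1}(A)}\rho_t^{-1/N}\,d\mu_t=\int_A\rho_t(\gamma_t)^{-1/N}\,d\ppi(\gamma)$, both sides of \eqref{eq:CD-def} acquire the same prefactor $\ppi(A)^{1/N-1}$, which cancels to yield
\begin{equation*}
\int_A\rho_t(\gamma_t)^{-1/N}\,d\ppi(\gamma)\ \geq\ \int_A\bigl[\sigma^{(1-t)}_{K,N}(\sfd(\gamma_0,\gamma_1))\rho_0(\gamma_0)^{-1/N}+\sigma^{(t)}_{K,N}(\sfd(\gamma_0,\gamma_1))\rho_1(\gamma_1)^{-1/N}\bigr]d\ppi.
\end{equation*}
Writing $\supp\ppi$ as a countable union of such bounded sets and using nonnegativity of the integrands, this inequality extends to every Borel $A\subset\geo(X)$; arbitrariness of $A$ then gives \eqref{eq:boundpoint} in the case $t=0$, $s=1$ pointwise $\ppi$-a.e.

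For the general case, define $R_{t,s}\colon\geo(X)\to\geo(X)$ by $R_{t,s}(\gamma)_u:=\gamma_{(1-u)t+us}$. Since $\ppi$-a.e.\ $\gamma$ is a constant-speed geodesic, so is $R_{t,s}(\gamma)$, and $(R_{t,s})_\sharp\ppi\in\gopt(\mu_t,\mu_s)$ with both marginals absolutely continuous; hence Theorem~\ref{thm:optmap} and the previous step both apply to $(R_{t,s})_\sharp\ppi$. Applying the endpoint inequality at the internal parameter $\alpha:=(r-t)/(s-t)$ and using $R_{t,s}(\gamma)_0=\gamma_t$, $R_{t,s}(\gamma)_\alpha=\gamma_r$, $R_{t,s}(\gamma)_1=\gamma_s$ yields exactly \eqref{eq:boundpoint}. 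The most delicate point is the density-bookkeeping in the endpoint step: one must verify that under restriction to $A$, the densities at \emph{all three} times scale by the same factor $\ppi(A)^{-1}$, so that the $\ppi(A)$-prefactors cancel cleanly on both sides of \eqref{eq:CD-def}. This is precisely where the non-branching property $\ppi=(T_s)_\sharp\mu_s$ from Theorem~\ref{thm:optmap} is essential, as without it the set of geodesics in $A$ passing through $x=\gamma_s$ could not be identified with $T_s^{-1}(A)$ at $x$.
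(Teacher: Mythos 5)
Your treatment of the inequality \eqref{eq:boundpoint} is correct and follows the paper's own argument essentially step for step: restrict $\ppi$ to a Borel set $A$ with bounded endpoint marginals, use the uniqueness in Theorem \ref{thm:optmap} to identify the normalized restriction as the plan realizing the $\CD^*(K,N)$ inequality, use the non-branching structure to obtain the density identity $\rho^A_u(\gamma_u)=\ppi(A)^{-1}\rho_u(\gamma_u)$ for $\ppi$-a.e.\ $\gamma\in A$ so that the $\ppi(A)^{1/N-1}$ prefactors cancel on both sides, let $A$ vary to pass from the integrated to the pointwise statement, and reduce the general case $t\leq r\leq s$ to the endpoint case via the restriction/reparametrization map. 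The density bookkeeping you single out as the delicate point is exactly the paper's identity \eqref{eq:simpleresc}, and your prefactor computation is right.

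The one genuine gap is the very first claim, namely that $\mu_t\ll\mm$ for all $t$ ``is an $\RCD(K,\infty)$ fact proved in \cite{RajalaSturm12}.'' Theorem \ref{thm:optmap} as stated contains no such assertion, and the $K$-convexity of the entropy along the unique geodesic only yields absolute continuity of the intermediate measures when ${\rm Ent}_\mm(\mu_0)$ and ${\rm Ent}_\mm(\mu_1)$ are finite, which can fail for general absolutely continuous endpoints with unbounded densities. Since every subsequent step of your argument manipulates the densities $\rho_t$, this point cannot simply be cited away. The paper closes it with a short truncation argument: restrict $\ppi$ to $G_M:=\{\gamma:\rho_0(\gamma_0),\rho_1(\gamma_1),\sfd(\gamma_0,\bar x),\sfd(\gamma_1,\bar x)\leq M\}$, observe that the normalized restriction is, by uniqueness, the only optimal plan between its marginals, which have bounded densities and bounded supports and hence finite entropy, deduce ${\rm Ent}_\mm((\e_t)_\sharp\ppi_M)<\infty$ and in particular $(\e_t)_\sharp\ppi_M\ll\mm$, and let $M\to\infty$. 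Inserting this (or an equivalent approximation) makes your proof complete; the rest needs no change.
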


\begin{proof} We start by proving that $\mu_t\ll\mm$ for every $t\in[0,1]$.  Fix $\bar x\in X$ and for $M>0$ let $G_M\subset \geo(X)$ be defined by 
\[
G_M:=\Big\{\gamma\in \geo(X)\ :\ \rho_0(\gamma_0),\rho_1(\gamma_1),\sfd(\gamma_0,\bar x),\sfd(\gamma_1,\bar x)\leq M\Big\}.
\]
For $M$ large enough we have $\ppi(G_M)>0$, thus the plan $\ppi_M:=c_M\ppi\restr{G_M}$ is well defined,  $c_M:=\ppi(G_M)^{-1}$ being the normalizing constant. Put $\mu_0^M:=(\e_0)_\sharp\ppi_M$, $\mu_1^M:=(\e_1)_\sharp\ppi_M$ and notice that $\mu_0^M,\mu_1^M\ll\mm$ and that by construction and since  optimality is stable by restriction we get  $\ppi_M\in\gopt( \mu_0^M, \mu_1^M)$. Hence the uniqueness part of Theorem \ref{thm:optmap} yields that  $\ppi_M$ is the only optimal plan from $\mu_0^M$ to $\mu_1^M$. Being $(X,\sfd,\mm)$ a $\CD(K,N)$ space it is also a  $\CD(K,\infty)$ space and thus fact that ${\rm Ent}_\mm(\mu_0^M),{\rm Ent}_\mm(\mu_1^M)<\infty$ (because both have bounded densities) give ${\rm Ent}_\mm((\e_t)_\sharp\ppi_M)<\infty$ for every $t\in[0,1]$. In particular, $(\e_t)_\sharp\ppi_M\ll\mm$ for every $t\in[0,1]$. Since $(\e_t)_\sharp\ppi_M\uparrow (\e_t)_\sharp\ppi=\mu_t$ as $M\to\infty$, we deduce $\mu_t\ll\mm$ for every $t\in[0,1]$.

We turn to \eqref{eq:boundpoint}. Assume for a moment $t=0$, $s=1$ and that the supports of $\mu_0,\mu_1$ are bounded and notice that in this case to prove \eqref{eq:boundpoint} is equivalent to prove that for any Borel set $G\subset\geo(X)$ it holds
\begin{equation}
\label{eq:intcd}
\begin{split}
-\int_G\rho_{r}^{-\frac1N}(\gamma_{r})\,\d\ppi(\gamma)\leq& -\int_G\rho_0(\gamma_0)^{-\frac1N}\sigma^{(1-t)}_{K,N}(\sfd(\gamma_0,\gamma_1))+\rho_1(\gamma_1)^{-\frac1N}\sigma^{(t)}_{K,N}(\sfd(\gamma_0,\gamma_1))\,\d\ppi(\gamma).
\end{split}
\end{equation}
Fix such Borel set $G\subset\geo(X)$, assume without loss of generality that $\ppi(G)>0$ and define $\ppi_G:=\ppi(G)^{-1}\ppi\restr G$.  Let $T_t:X\to\geo(X)$ be the maps given by Theorem \ref{thm:optmap} and notice that the identity $\ppi=(T_t)_\sharp(\e_t)_\sharp\ppi$ ensures $(\e_t)_\sharp\ppi_G=\ppi(G)^{-1}\chi_G\circ T_t(\e_t)_\sharp\ppi$. In other words, letting $\rho_{G,t}\mm=(\e_t)_\sharp\ppi_G$, a direct consequence of the fact that $\ppi$ is concentrated on a set of non-branching geodesics is that we have
\begin{equation}
\label{eq:simpleresc}
\rho_{G,t}(\gamma_t)=\ppi(G)^{-1}\rho_t(\gamma_t),\qquad\ppi-a.e.\ \gamma\in G.
\end{equation}
It is clear that $\ppi_G$ is optimal from $\rho_{G,0}\mm$ to $\rho_{G,1}\mm$ and by the uniqueness part of Theorem \ref{thm:optmap} we know that it is the only optimal plan, hence the $\CD^*(K,N)$ condition and the fact that $\rho_{G,0}\mm,\rho_{G,1}\mm$ have bounded support (because we assumed $\mu_0,\mu_1$ to have bounded support), yield
\[
-\int\rho_{r}^{-\frac1N}(\gamma_{r})\,\d\ppi_G(\gamma)\leq -\int\rho_0(\gamma_0)^{-\frac1N}\sigma^{(1-t)}_{K,N}(\sfd(\gamma_0,\gamma_1))+\rho_1(\gamma_1)^{-\frac1N}\sigma^{(t)}_{K,N}(\sfd(\gamma_0,\gamma_1))\,\d\ppi_G(\gamma),
\]
which, taking into account \eqref{eq:simpleresc}, is \eqref{eq:intcd}.

The assumption that $\mu_0,\mu_1$ have bounded support  can be removed with the same truncation argument used at the beginning of the proof. To deal with the case of arbitrary $0\leq t<s\leq 1$ use the uniqueness part of Theorem \ref{thm:optmap} again to deduce that the only optimal plan from $\mu_t$ to $\mu_s$ is given by $({\rm Restr}_t^s)_\sharp\ppi$, where ${\rm Restr}_t^s:\geo(X)\to\geo(X)$ is defined by
\[
{\rm Restr}_t^s(\gamma)_r:=\gamma_{(1-r)t+rs}.
\]
Then repeat the argument used for the case $t=0$, $s=1$.
\end{proof}

\begin{lemma}\label{le:ac}
Let $\mu,\nu\in\probt X$ be with bounded support and such that  $\mu\leq C\mm$ for some $C>0$. Then there exists a geodesic $(\mu_t)$ from $\mu$ to $\nu$ such that $\mu_t\ll\mm$ for every $t\in[0,1)$.
\end{lemma}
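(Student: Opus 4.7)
My plan is to approximate $\nu$ by absolutely continuous measures $\nu_n$, apply Proposition~\ref{prop:varphi} to obtain a uniform-in-$n$ density bound on the interpolants between $\mu$ and $\nu_n$, and then pass to a weak limit.

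First, I would construct $\nu_n \in \probt X$ with $\nu_n \ll \mm$, supports contained in a fixed bounded set, and $W_2(\nu_n, \nu) \to 0$. A concrete construction: partition a fixed bounded neighborhood of $\supp(\nu)$ into finitely many Borel pieces $E_i^n$ of diameter at most $1/n$ with $\mm(E_i^n) > 0$ (possible since $\supp(\mm) = X$), and set $\nu_n := \sum_i \nu(E_i^n)\,\mm(E_i^n)^{-1}\,\mm\restr{E_i^n}$. Transporting each $\nu\restr{E_i^n}$ onto $\nu_n\restr{E_i^n}$ gives $W_2(\nu_n,\nu) \leq 1/n$.

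Next, since $\mu, \nu_n \ll \mm$ and lie in $\probt X$, Theorem~\ref{thm:optmap} yields a unique $\ppi^n \in \gopt(\mu, \nu_n)$, and Proposition~\ref{prop:varphi} asserts that $(\e_t)_\sharp\ppi^n = \rho_t^n\mm$ for every $t$ with the pointwise bound \eqref{eq:boundpoint}. Writing $\mu = \rho_0\mm$ and specializing to $t=0$, $s=1$, $r \in (0,1)$, and then discarding the nonnegative term involving $\rho_1^n$, I get
\[
\rho_r^n(\gamma_r)^{-1/N} \;\geq\; \rho_0(\gamma_0)^{-1/N}\,\sigma^{(1-r)}_{K,N}(\sfd(\gamma_0,\gamma_1)), \qquad \ppi^n\text{-a.e.\ }\gamma.
\]
The bound $\rho_0 \leq C$, together with the fact that $\sfd(\gamma_0,\gamma_1)$ stays in a fixed bounded range (as the supports of $\mu$ and of all the $\nu_n$ lie in one common ball), makes the right-hand side bounded below by a positive constant $c_r$ depending only on $C$, $K$, $N$, $r$ and a diameter bound for the supports. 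A standard transfer (observing that $\rho_r^n > c_r^{-N}$ on a set of $(\e_r)_\sharp\ppi^n$-measure zero forces $\rho_r^n \leq c_r^{-N}$ $\mm$-a.e.) then gives the $\mm$-a.e.\ bound $\rho_r^n \leq c_r^{-N}$, uniformly in $n$, for each $r \in [0,1)$.

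Finally, tightness (from properness of $X$ and the uniform bound on the supports of the endpoints of $\ppi^n$) lets me extract a subsequential weak limit $\ppi^n \to \ppi$ in $\probt{\geo(X)}$. The marginals converge to $\mu$ and $\nu$, and lower semicontinuity of $\int \sfd^2(\gamma_0,\gamma_1)\,\d\ppi$ together with $W_2^2(\mu,\nu_n) \to W_2^2(\mu,\nu)$ forces $\ppi \in \gopt(\mu,\nu)$; setting $\mu_t := (\e_t)_\sharp\ppi$ gives the desired geodesic. For each $t \in [0,1)$ and each open $U$ with $\mm(U) < \infty$, weak convergence yields $\mu_t(U) \leq \liminf_n (\e_t)_\sharp\ppi^n(U) \leq c_t^{-N}\mm(U)$, and outer regularity of the locally finite Borel measure $\mm$ upgrades this to $\mu_t \leq c_t^{-N}\mm$, hence $\mu_t \ll \mm$. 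The main technical nuisance is bookkeeping: arranging the approximations $\nu_n$ to meet all requirements at once, and checking the positive lower bound on $\sigma^{(1-r)}_{K,N}$ uniformly on a bounded range of arguments (in the $K>0$ case, values beyond the Bonnet--Myers threshold only help, since $\sigma$ there equals $+\infty$).
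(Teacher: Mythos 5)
Your argument is correct and follows essentially the same route as the paper's proof: approximate $\nu$ by absolutely continuous measures with uniformly bounded supports, use the uniqueness from Theorem~\ref{thm:optmap} together with the pointwise bound \eqref{eq:boundpoint} (dropping the $\rho_1$ term) to get a density bound $\rho^n_t\leq Cf(t)$ uniform in $n$, and pass to a weak limit via properness. You merely spell out more explicitly the compactness step and the transfer of the density bound to the limit, which the paper leaves as ``a simple compactness argument.''
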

\begin{proof}
Let $(\nu^n)\subset\probt X$ be a sequence of absolutely continuous measures weakly converging to $\nu$ and with uniformly bounded supports and $\ppi^n\in\gopt(\mu,\nu^n)$ the unique optimal plan given by Theorem \ref{thm:optmap}. Then the bound \eqref{eq:boundpoint} gives, after dropping the term involving $\rho_1$, the inequality
\begin{equation}
\label{eq:sar}
\rho_t(\gamma_t)\leq \rho_0(\gamma_0)(\sigma_{K,N}^{(1-t)}(\sfd(\gamma_0,\gamma_1)))^{-N},\qquad\ppi^n-a.e.\ \gamma .
\end{equation}
By the definition of the distortion coefficients $\sigma_{K,N}^{(1-t)}(\theta)$ we see that for some function $f:[0,1)\to\R^+$ depending on $K$, $N$ and ${\rm diam}(\supp(\mu)\cup(\cup_n\supp(\nu^n)))$, it holds $(\sigma_{K,N}^{(1-t)}(\sfd(\gamma_0,\gamma_1)))^{-N}\leq f(t)$ and thus \eqref{eq:sar} and the bound $\mu\leq C\mm$ give
\[
\mu^n_t:=(\e_t)_\sharp\ppi^n\leq Cf(t)\mm,\qquad\forall t\in[0,1).
\]
This bound is independent on $n\in\N$, hence with a simple compactness argument based on the fact that $(X,\sfd,\mm)$ is proper we get the conclusion by letting $n\to\infty$.
\end{proof}

We shall also use the following lemma, whose proof was given in \cite{FigalliGigli11} (see also \cite{Gigli11}) for the case of Riemannian manifolds; yet, the argument is only metric and can be repeated without any change. We report it just for completeness.
\begin{lemma}\label{le:lip}
Let $(X,\sfd)$ be a proper geodesic space,   $\varphi$ a $c$-concave function and $\Omega\subset X$ the interior of $\{\varphi>-\infty\}$. Then $\varphi$ is locally bounded and locally Lipschitz on $\Omega$ and for every compact set $K\subset\Omega$ the set $\cup_{x\in K}\partial^c\varphi(x)$ is bounded and not empty.
\end{lemma}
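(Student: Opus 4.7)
The plan is to set $F_y(x):=\sfd^2(x,y)/2-\varphi^c(y)$, so that $\varphi(x)=\inf_y F_y(x)$, and to establish as a single technical claim the following compactness property: for every compact $K\subset\Omega$ and every sequence $(x_n,y_n)$ with $x_n\in K$ and $F_{y_n}(x_n)\leq\varphi(x_n)+1/n$, the points $y_n$ remain in a bounded region of $X$. Granted this, the three conclusions of the lemma follow quickly. Nonemptiness of $\partial^c\varphi(x)$ for $x\in\Omega$ is obtained by extracting a convergent subsequence $y_n\to y_\infty$ and using upper semicontinuity of $\varphi^c$ (a direct consequence of the definition as an infimum of continuous functions of $y$) to deduce $F_{y_\infty}(x)\leq\varphi(x)$, hence equality. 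Boundedness of $\bigcup_{x\in K}\partial^c\varphi(x)$ is the uniform version of the same statement applied to exact minimizers. Local Lipschitz continuity follows because $\varphi$ can be expressed on a compact neighbourhood as the infimum of $F_y$'s over a bounded set of $y$'s, and each $F_y$ is Lipschitz on bounded subsets of $X$ with a controlled constant; local upper boundedness is trivial from $\varphi(x)\leq F_{y^*}(x)$ for any single $y^*$ with $\varphi^c(y^*)\in\R$ (which exists since $\varphi\not\equiv-\infty$), and local lower boundedness will come out as a byproduct of the compactness claim.

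The compactness claim is proved by contradiction: fix a reference $x_0\in\Omega$ and suppose $\sfd(x_0,y_n)\to\infty$; pass to a subsequence so that $x_n\to x^*\in K\subset\Omega$, and pick $\rho>0$ with $\bar B(x^*,\rho)\subset\Omega$. Parametrize each geodesic from $x_n$ to $y_n$ by arclength and truncate it to length $\rho$; by properness these paths live in a compact set, so by Arzel\`{a}--Ascoli a further subsequence converges uniformly to a unit-speed geodesic $\tilde\gamma^\infty\colon[0,\rho]\to X$ starting at $x^*$. For any fixed $s\in(0,\rho]$, the identity $\sfd(\tilde\gamma^n(s),y_n)=\sfd(x_n,y_n)-s$ together with the triangle inequality gives the estimate
\[
F_{y_n}(\tilde\gamma^\infty(s))\leq\varphi(x_n)+1+\tfrac{s^2}{2}-(s-\eps_n)\sfd(x_n,y_n)+O(\eps_n),
\]
where $\eps_n:=\sfd(\tilde\gamma^n(s),\tilde\gamma^\infty(s))\to0$. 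Since $\varphi$ is locally bounded above and $\sfd(x_n,y_n)\to\infty$, the right-hand side tends to $-\infty$; hence $\varphi(\tilde\gamma^\infty(s))=-\infty$, contradicting $\tilde\gamma^\infty(s)\in\bar B(x^*,\rho)\subset\Omega$. Local lower boundedness of $\varphi$ is obtained by running the same argument allowing $\varphi(x_n)\to-\infty$; in the alternative case where the $y_n$ stay bounded, upper semicontinuity of $\varphi^c$ rules out $\varphi^c(y_n)\to+\infty$ (using that $\varphi^c$ is nowhere $+\infty$ since $\varphi\not\equiv-\infty$), again giving the contradiction.

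The main obstacle is precisely this compactness step. The naive approach of replacing $x_n$ by the moving point $\tilde\gamma^n(s)$ does show that $\varphi(\tilde\gamma^n(s))\to-\infty$ along a sequence of points in $\Omega$, but this alone is not a contradiction because $\varphi$ is only upper semicontinuous, not lower semicontinuous, so pointwise divergence at a sequence converging inside $\Omega$ says nothing about the value of $\varphi$ at the limit. The crux is to transfer the estimate from the moving $\tilde\gamma^n(s)$ to the fixed limit point $\tilde\gamma^\infty(s)\in\Omega$, verifying that the displacement error $\eps_n\cdot\sfd(x_n,y_n)$, which grows linearly in the diverging distance $\sfd(x_n,y_n)$, is still absorbed by the main term $-s\,\sfd(x_n,y_n)$ provided $s>0$ is fixed and $\eps_n\to0$. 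This forces $\varphi$ to equal $-\infty$ at a concrete point of $\Omega$, which is the genuine contradiction.
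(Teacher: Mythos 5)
Your proof is correct and follows essentially the same route as the paper's: the heart in both cases is the contradiction obtained by following unit-speed geodesics toward far-away near-optimizers $y_n$ and watching the quadratic cost force $\varphi$ down to $-\infty$ at a point of $\Omega$, after which boundedness of the superdifferential and the representation of $\varphi$ as an infimum of uniformly Lipschitz functions give the remaining claims. The only real difference is technical: the paper reaches the contradiction by showing $\sup_{B_1(\gamma^n_1)}\varphi\to-\infty$ on a fixed-radius ball and concluding that $\varphi\equiv-\infty$ on the interior of the limit ball, whereas you transfer the estimate directly to the fixed limit point $\tilde\gamma^\infty(s)$ by checking that the displacement error $\eps_n\,\sfd(x_n,y_n)$ is absorbed by the main term; both devices correctly circumvent the fact that $\varphi$ is only upper semicontinuous.
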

\begin{proof} 
Being $c$-concave, $\varphi$ is the infimum of a family of continuous functions, hence upper-semicontinuous and thus locally bounded from above. We prove that it is locally bounded from below by contradiction. Thus, recall that $(X,\sfd)$ is proper, assume that there exists a sequence $(x_n)\subset \Omega$ converging to some $x_\infty\in\Omega$ such that $\varphi(x_n)\to-\infty$ as $n\to\infty$. For every $n\in\N$ let $y_n\in X$ be such that
\begin{equation}
\label{eq:yn}
\varphi(x_n)\geq \frac{\sfd^2(x_n,y_n)}{2}-\varphi^c(y_n)-1,\qquad\forall n\in\N,
\end{equation}
and notice that this bound and the fact that $\varphi(x_n)\to-\infty$ yield that $\varphi^c(y_n)\to+\infty$ as $n\to\infty$. Thus from 
\[
\R\ni \varphi(x_\infty)\leq \frac{\sfd^2(x_\infty,y_n)}{2}-\varphi^c(y_n),\qquad\forall n\in\N,
\]
we deduce that $\frac{\sfd^2(x_\infty,y_n)}{2}\to+\infty$ as well as $n\to\infty$ and therefore also that
\[
\lim_{n\to\infty}\frac{\sfd^2(x_n,y_n)}{2}\to+\infty.
\]
In particular, without loss of generality we can assume $\sfd(x_n,y_n)\geq 1$ for every $n\in\N$. Now let $\gamma^n:[0,\sfd(x_n,y_n)]\to X$ be a geodesic from $x_n$ to $y_n$ parametrized by arc-length.  We claim that 
\begin{equation}
\label{eq:claimcn}
\sup_{B_1(\gamma^n_1)}\varphi\to-\infty,\qquad \textrm{as } n\to\infty.
\end{equation}
Indeed, for $x\in B_1(\gamma^n_1) $  we have
\[
\begin{split}
\varphi(x)&\leq \frac{\sfd^2(x,y_n)}2-\varphi^c(y_n)\leq \frac{(\sfd(x,\gamma^n_1)+\sfd(\gamma^n_1,y_n))^2}2-\varphi^c(y_n)\\
&\leq \frac{\sfd^2(x_n,y_n)}2-\varphi^c(y_n)\leq\varphi(x_n)+1,
\end{split}
\]
having used \eqref{eq:yn} in the last inequality. Given that the $x_n$'s were chosen so that $\varphi(x_n)\to-\infty$ as $n\to\infty$, our claim \eqref{eq:claimcn} is proved.  

Up to pass to a subsequence, we can assume that $(\gamma^n_1)$ converges to some $z\in X$. From \eqref{eq:claimcn} it easily follows that in the internal part of $B_1(z)$ the function $\varphi$ is identically $-\infty$. Given that $\sfd(x,z)=1$, this fact contradicts the assumption that $x\in\Omega$. Hence $\varphi$ is locally bounded.

Now let $\bar x\in\Omega$ and $r>0$ be such that $B_{2r}(x)\subset\Omega$. Pick  $x\in B_r(\bar x)$ and let $(y_n)$ be such that $\varphi(x)=\lim_{n}\frac{\sfd^2(x,y_n)}{2}-\varphi^c(y_n)$. We claim that there exists a constant $C$ depending only on $\bar x, r$ and $\varphi$ such that $(y_n) \subset B_C(\bar x)$. In proving this we may assume that $\sfd(x,y_n)>r$ for all $n$. 
Pick unit speed geodesics $\gamma^n:[0,\sfd(x,y_n)]\to X$ from $x$ to $y_n$ and notice that
\[
\begin{split}
\lims_{n\to\infty}\varphi(x)-\varphi(\gamma^n_r)\geq \lims_{n\to\infty}\frac{\sfd^2(x,y_n)}{2}-\frac{\sfd^2(\gamma^n_r,y_n)}{2}=\lims_{n\to\infty}r\sfd(x,y_n)-\frac{r^2}{2}.
\end{split}
\]
By construction we have $x,\gamma^n_r\in B_{2r}(\bar x)\subset \Omega$ thus by what we previously proved we know that the leftmost side of the above inequality is bounded by some constant depending only on $\bar x, r$ and $\varphi$. Hence the sequence $(y_n)$ is bounded and we directly get that any limit point belongs to $\partial^c\varphi(x)$, which therefore is non-empty. The very same argument also shows that $C:=\cup_{x\in B_r(\bar x)}\partial^c\varphi(x)$ is bounded. In particular we get 
\[
\varphi(x)=\min_{y\in C}\frac{\sfd^2(x,y)}{2}-\varphi^c(y),\qquad\forall x\in B_r(\bar x),
\]
and since for $y\in C$ the functions $x\mapsto \frac{\sfd^2(x,y)}{2}-\varphi^c(y)$ are uniformly Lipschitz, we deduce the local Lipschitz continuity of $\varphi$ as well.
\end{proof}
\begin{theorem}[Exponentiation and optimal maps] Let $K\in \R$, $N\in[1,\infty)$, $(X,\sfd,\mm)$ an $\RCD^*(K,N)$ space,  $\varphi$ a $c$-concave function and $\Omega\subset X$ the interior of $\{\varphi>-\infty\}$. Then for $\mm$-a.e. $x\in\Omega$ there exists a unique geodesic $\gamma$ with $\gamma_0=x$ and $\gamma_1\in\partial^c\varphi(x)$.

In particular, for every $\mu,\nu\in\probt X$ with $\mu\ll\mm$ there exists a unique optimal geodesic plan $\ppi\in\gopt(\mu,\nu)$ and this plan is induced by a map and concentrated on a set of non-branching geodesics.
\end{theorem}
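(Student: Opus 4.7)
The plan is to deduce the optimal-plan assertion from the exponentiation assertion and to prove the exponentiation by pitting Lemma~\ref{le:ac} against Theorem~\ref{thm:optmap} on half-transports. For the reduction, fix a Kantorovich potential $\varphi$ from $\mu$ to $\nu$ (taken real-valued on $X$ as in the preliminaries); every $\ppi\in\gopt(\mu,\nu)$ is concentrated on pairs $(\gamma_0,\gamma_1)\in\partial^c\varphi$. Once the exponentiation assertion is in place, $\ppi$ must coincide with $(x\mapsto\gamma^x)_\sharp\mu$, yielding uniqueness and the map structure; non-branching then follows because this unique plan agrees with the ``good'' plan $\tilde\ppi$ produced below, which is non-branching by construction.

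For the exponentiation, I would localize: fix a bounded open $B\subset\Omega$ with $\mm(B)\in(0,\infty)$ and set $\mu:=\mm\restr B/\mm(B)$, so $\mu\leq C\mm$ with bounded support. Lemma~\ref{le:lip} provides a Borel selection $y:B\to\partial^c\varphi(\cdot)$ with bounded image, and $\nu:=y_\sharp\mu$ makes $\varphi$ a Kantorovich potential from $\mu$ to $\nu$. By Lemma~\ref{le:ac} there is $\tilde\ppi\in\gopt(\mu,\nu)$ with $\tilde\mu_t:=(\e_t)_\sharp\tilde\ppi\ll\mm$ for every $t\in[0,1)$. For each such $t$, the restricted plan $({\rm Restr}_0^t)_\sharp\tilde\ppi$ is optimal between the absolutely continuous measures $\mu$ and $\tilde\mu_t$, so Theorem~\ref{thm:optmap} makes it the unique optimal plan, induced by a map, and concentrated on non-branching geodesics. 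Letting $t\uparrow 1$ and using that continuous curves on $[0,1]$ are determined by their restrictions to $[0,1-\eps]$, $\tilde\ppi$ itself is induced by a Borel map $\tilde T:X\to\geo(X)$ and concentrated on non-branching geodesics, so for $\mu$-a.e.\ $x$ the geodesic $\tilde T(x)$ joins $x$ to a point of $\partial^c\varphi(x)$.

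To finish the exponentiation I would show $\tilde T(x)$ is the \emph{only} such geodesic for $\mm$-a.e.\ $x\in B$, by contradiction: if a Borel rival $\eta:B_0\to\geo(X)$ with $\eta^x_0=x$, $\eta^x_1\in\partial^c\varphi(x)$ and $\eta^x\neq\tilde T(x)$ exists on $B_0\subset B$ with $\mm(B_0)>0$, the plan $\ppi^\star$ obtained by replacing $\tilde T$ by $\eta$ on $B_0$ is still optimal, because its coupling stays in $\partial^c\varphi$. Running the previous construction for $(\mu,(\e_1)_\sharp\ppi^\star)$ gives another map-plan $\tilde\ppi^\star$ with absolutely continuous intermediate marginals. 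Since $\ppi^\star$ and $\tilde\ppi^\star$ share the Kantorovich potential $\varphi$, their mean $\bar\ppi:=\tfrac12(\ppi^\star+\tilde\ppi^\star)$ is optimal; once $\bar\mu_{1/2}\ll\mm$, Theorem~\ref{thm:optmap} forces the disintegration of $\bar\ppi$ at $\mu$-a.e.\ $x$ to be a Dirac, identifying $\eta^x$ with $\tilde T^\star(x)$ on $B_0$. The parallel averaging argument applied to $\tilde\ppi$ and $\tilde\ppi^\star$ together with the non-branching of $\tilde\ppi$ then identifies $\tilde T^\star$ with $\tilde T$ on $B_0$, giving $\eta^x=\tilde T(x)$ and the desired contradiction.

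The step I expect to be most delicate is the absolute continuity of $\bar\mu_{1/2}$: the $\tilde\ppi^\star$-contribution is absolutely continuous by construction, but $\ppi^\star$ might contribute singular mass through the $\eta$-piece. I would handle this either by iterating the mixing (so that the singular mass is halved at each averaging step and vanishes in the limit) or by a refinement of Lemma~\ref{le:ac} showing that \emph{every} optimal plan with absolutely continuous source and bounded-support target has absolutely continuous intermediate marginals, via the self-improving bound of Proposition~\ref{prop:varphi}.
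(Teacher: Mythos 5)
There is a genuine gap, and it sits exactly where you flagged it: the absolute continuity of the intermediate marginals of the plan carrying the rival geodesics. Your contradiction argument needs to feed the averaged plan $\bar\ppi=\tfrac12(\ppi^\star+\tilde\ppi^\star)$ into Theorem~\ref{thm:optmap}, which requires \emph{both} endpoint measures of the plan under consideration to be absolutely continuous; the $\eta$-piece of $\ppi^\star$ gives you no control on $(\e_t)_\sharp$ for $0<t<1$ (nor at $t=1$, since a pushforward of $\mm\restr{B_0}$ under a Borel selection of $\partial^c\varphi$ can be entirely singular). Neither of your proposed remedies closes this. Iterating the averaging only halves the singular mass at each step and never removes it, and the limiting plan loses precisely the information ($\eta^x\neq\tilde T(x)$ on a set of positive measure) that is supposed to produce the contradiction. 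The ``refinement of Lemma~\ref{le:ac}'' you invoke --- that \emph{every} optimal plan with absolutely continuous source has absolutely continuous intermediate marginals --- is essentially the corollary that the paper \emph{deduces from} the theorem; moreover Proposition~\ref{prop:varphi} cannot supply it, since its proof applies Theorem~\ref{thm:optmap} and therefore needs $\mu_0$ \emph{and} $\mu_1$ absolutely continuous. So the fix is circular.

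The paper's proof avoids this trap by never transporting the rival geodesics all the way to time $1$ and never asking any plan built from them to have good marginals. Instead, assuming $\#G(x)\geq 2$ on a set of positive measure, a Fubini argument produces a single time $t_0>0$ and a compact set of positive measure on which $\diam G_{t_0}(x)\geq a/2$; two Borel selections then give maps $T,S$ landing in disjoint regions at time $t_0$, hence two target measures $\nu_1=T_\sharp\mu$ and $\nu_2=S_\sharp\mu$ with disjoint supports, both admitting $t_0\varphi$ as a Kantorovich potential from $\mu$. Lemma~\ref{le:ac} is then applied \emph{afresh to each pair} $(\mu,\nu_i)$, producing two geodesics $(\mu^i_t)$ with $\mu^i_t\ll\mm$ for $t<1$ and $\mu^1_t\neq\mu^2_t$ for $t$ close to $1$. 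The plans $\ppi^i\in\gopt(\mu,\mu^i_t)$ share the potential $tt_0\varphi$, so their average is optimal, has \emph{both} marginals absolutely continuous (this is the point your construction cannot reach), and is not induced by a map --- contradicting Theorem~\ref{thm:optmap}. In short: the rival geodesics are used only to locate two separated intermediate positions, and Lemma~\ref{le:ac} is used twice to replace the uncontrolled branches by interpolations with absolutely continuous marginals. Your first part (localization, selection of $\nu$, and the reduction of the second assertion to the first via a Kantorovich potential and a restriction argument) is consistent with the paper; it is the contradiction step that needs to be restructured along these lines.
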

\begin{proof} Existence trivially follows from the fact that $\partial^c\varphi(x)$ is non-empty for every $x\in\Omega$ and the fact that $(X,\sfd)$ is geodesic. For uniqueness we argue by contradiction. For $x\in\Omega$ let  $G(x)\subset\geo(X)$ be the set of $\gamma$'s such that $\gamma_0=x$ and $\gamma_1\in\partial^c\varphi(x)$ and assume that there is a compact set $K_1\subset \Omega$ such that $\mm(K_1)>0$ and $\#G(x)\geq 2$ for every $x\in K_1$. By Lemma \ref{le:lip} we know that for some $L>0$ we have $\sfd(\gamma_0,\gamma_1)\leq L$ for any $x\in K_1$ and $\gamma\in G(x)$ so that the geodesics in $\cup_{x\in K_1}G(x)$ are equi-Lipschitz.

For some $a>0$ the compact set $K_2\subset K_1$ of $x$'s such that ${\rm diam}G(x)\geq a$ is such that $\mm(K_2)>0$. Pick such $a$ and $K_2$. For $t\in[0,1]$ put  $G_t(x):=\{\gamma_t:\gamma\in G(x)\}\subset X$ and consider the set $\mathcal K\subset K_2\times [0,1]$ of $(x,t)$'s such that  ${\rm diam}G_t(x)\geq\frac a2$. It is easy to check that $\mathcal K$ is closed and the continuity of geodesics grants that for any $x\in K_2$ the set of $t$'s such that $(x,t)\in \mathcal K$  has positive $\mathcal L^1$-measure. By Fubini's theorem, there is $t_0\in[0,1]$ such that the compact set $K_3\subset K_2$ of $x$'s such that ${\rm diam}G_{t_0}(x)\geq\frac a2$ has positive $\mm$-measure. Notice that necessarily $t_0>0$. With a Borel selection argument we can find a Borel map $T:K_3\to X$ such that $T(x)\in G_{t_0}(x)$ for every $x\in K_3$. Let $x_0\in X$ be such that  $T_\sharp(\mm\restr {K_3})(B_{\frac a6}(x_0))>0$ and put  $A:=T^{-1}(B_{\frac a6}(x_0))$, so that $\mm(A)>0$. By construction, the map $A\ni x\mapsto G_{t_0}(x)\setminus B_{\frac{a}3}(x_0)$ is Borel and has non-empty values, thus again with a Borel selection argument we can find Borel map $S:A\to X$ such that $S(x)\in G_{t_0}(x)\setminus B_{\frac{a}3}(x_0) $ for every $x\in A$.

Let $\mu:=\mm(A)^{-1}\mm\restr A$, $\nu_1:=T_\sharp\mu$ and $\nu_2:=S_\sharp\mu$. By construction $\nu_1$ and $\nu_2$ have disjoint support, and in particular $\nu_1\neq \nu_2$. Furthermore, the function $t_0\varphi$ is a Kantorovich potential both from $\mu$ to $\nu_1$ and from $\mu$ to $\nu_2$. Apply Lemma \ref{le:ac} to both $(\mu,\nu_1)$ and $(\mu,\nu_2)$ to find geodesics $(\mu^i_t)$, $i=1,2$, from $\mu$ to $\nu_1,\nu_2$ respectively such that $\mu^i_t\ll\mm$ for every $t\in[0,1)$, $i=1,2$. By construction, for $t$ sufficiently close to 1 we have $\mu^1_t\neq \mu^2_t$. Fix such $t$, let $\ppi^i\in\gopt(\mu,\mu^i_t)$, $i=1,2$ and notice that $\ppi^1\neq \ppi^2$ and that $\supp((\e_0,\e_1)_\sharp\ppi^i)\subset\partial^c(tt_0\varphi)$, $i=1,2$. 

Thus for the plan $\ppi:=\frac12(\ppi^1+\ppi^2)$ it also holds $\supp((\e_0,\e_1)_\sharp\ppi)\subset \partial^c(tt_0\varphi)$ and thus is optimal. Moreover it satisfies $(\e_0)_\sharp\ppi,(\e_1)_\sharp\ppi\ll\mm$ and, by construction, is not induced by a map. This contradicts Theorem \ref{thm:optmap}, concluding the proof of the first part of the statement.

For the second part, notice that if the optimal geodesic plan is not unique or not induced by a map, there must be $\ppi\in\gopt(\mu,\nu)$ which is not induced by a map. With a restriction argument we can then assume that $\mu:=(\e_0)_\sharp\ppi$, and $\nu:=(\e_1)_\sharp\ppi$ have bounded support, with $\mu\ll\mm$. But in this case there is a locally Lipschitz Kantorovich potential from $\mu$ to $\nu$ and the first part of the statement gives the conclusion.
\end{proof}

\end{document}